\newcommand{\hd}{d_{\Delta}}
\newcommand{\ones}{\mathbf{1}}
\newcommand{\reals}{\mathbb{R}}
\newcommand{\integers}{\mathbb{Z}}
\newcommand{\Jf}{\mathfrak{I}}
\newcommand{\T}{\top}
\newcommand*\xbar[1]{%
  \hbox{%
     \vbox{%
      \hrule height 0.7pt 
      \kern0.35ex
      \hbox{%
         \kern-0.1em
         \ensuremath{#1}%
         \kern-0.1em
      }%
     }%
  }%
} 
\newenvironment{proof}{\textbf{Proof.~}}{\hfill$\square$}
\begin{document}

\begin{frontmatter}

\title{Dissipativity-based static output feedback design for discrete-time LTI systems with time-varying input delays\thanksref{footnoteinfo}}
                                           \author{Thiago Alves Lima*}\ead{thiago.lima@alu.ufc.br},    
\author{Diego de S. Madeira}\ead{dmadeira@dee.ufc.br}  

\thanks[footnoteinfo]{This paper was not presented at any conference. Thanks to the Brazilian agency CAPES for financial support.\\ $^\ast$ Corresponding author.}

\address{Departamento de Engenharia Elétrica, Universidade Federal do Ceará, Fortaleza, Brazil.}  

\begin{keyword}                           
Static output feedback; Time-varying delays; Stabilization; Discrete-time systems; Dissipativity-based control.               
\end{keyword}                             

\begin{abstract}                          
This note is concerned with the presentation of new delay-dependent dissipativity-based convex conditions (expressed in the form of linear matrix inequalities) for the design of static output feedback (SOF) stabilizing gains for open-loop unstable discrete-time systems with input time-varying delays. A modified definition of QSR-dissipativity combined with the use of Lyapunov-Krasovskii functionals as storage functions and the application of Finsler's Lemma lead to the gathering of non-interactive design conditions. We show that, differently from most works dealing with controller design for time-delayed systems, the developed conditions present very small conservatism compared to stability analysis conditions derived with the same strategy. Due to being a particular case of SOF with an identity output matrix, static state feedback (SSF) gains can also trivially be computed from the conditions.
\end{abstract}

\end{frontmatter}

\section{Introduction}

The static output feedback stabilization problem has been widely investigated in the control literature \citep{SADABADI2016}. There exists a consensus among the control research community that a definitive answer is yet to be found even for the simple case of linear time-invariant (LTI) systems. Most of the proposed solutions are either interactive \citep{CAO1998}, impose conservative model transformations and/or rank constraints on the plant matrices \citep{Crusius_1999}. The reader is encouraged to read the survey papers \cite{SADABADI2016,SYRMOS1997} for a view of the several existing strategies for computing SOF gains. 

On the other hand, time delay is another problem difficulty to be dealt with since its presence can degrade closed-loop stability and performance properties \citep{Fridman_2014}. One strategy to study stability and stabilization of such systems is by means of the proposal and manipulation of so-called Lyapunov-Krasovskii functionals (LKFs). For such systems, even full-state feedback control design can be challenging since sufficient and necessary convex design conditions are yet to be found, contrary to the non-delayed LTI case. Some solutions to the state feedback design can be found in \cite{Suplin2006,SUPLIN2007,Wu2010,FRIDMAN2004,Shaked_2002,Zeng_2015,ZHANG2005,CHEN2006}. Nonetheless, all these solutions are based either in iterative approaches or on the linearization of bilinear matrix inequalities (BMIs) by means of conservative transformations and/or introduction of slack variables.  

The static output feedback stabilization of input time-delayed systems is, of course, even more evolved as it combines both the difficulties related to the already complicated SOF design of LTI systems and the ones introduced by the delay itself. The fact that the survey papers \cite{SADABADI2016,SYRMOS1997} have not even mentioned the SOF problem for delayed systems further enlightens how unexplored this problem still is. Nonetheless, one can cite the iterative approaches proposed in \cite{DU2010} and \cite{Barreau_2018} for continuous-time delayed systems. 

In this note, we propose a solution to the static output feedback stabilization of linear discrete-time systems with input time-varying delays that can arbitrarily vary between maximum and minimum bounds. As long as the authors know, this is the first work proposing a solution for the SOF design for this class of system. The proposal is inspired by the ideas recently published in \cite{Madeira_2021}, where necessary and sufficient conditions for the exponential stabilizability of nonlinear non-delayed systems by linear SOF was presented with the aid of dissipativity theory. In order to develop the conditions in this note, an appropriate modification to the definition of strict QSR-dissipativity used in \cite{Madeira_2021} is introduced to deal with the case of LTI systems with input time-varying delays, along with the use of a LKF (denoted $V(k)$ in the paper) as storage function and the application of a second inequality that ensures negativity of its forward difference $\Delta V(k)$ along the trajectories of the closed loop with the designed control gain. The design conditions are expressed in the form of linear matrix inequalities (LMIs), are non-interactive, and no restriction on the plant output matrix is assumed.


\textbf{Notation.} For a real matrix $Y $ in $ \reals^{n \times m}$, $Y^{\T} $ in $ \reals^{m \times n}$ means its transpose, $Y_{\left(i\right)}$ denotes its $i$th row, while for $v $ in $ \reals^{m}$, $v_{\left(i\right)}$ denotes its $i$th component. For matrices $W = W^{\T}$ and $Z = Z^{\T} $ in $ \reals^{n \times n}$, $W \succ Z$ means that $W-Z$ is positive definite. Likewise, $W \succeq Z$ means that $W-Z$ is positive semi-definite. $\mathbb{S}_n^{+}$ stands for the set of symmetric positive definite matrices. $\ones$, $I$, and $0$ denote all-ones, identity, and null matrices of appropriate dimensions, although their dimensions can be explicitly presented whenever relevant. The $\star$ in the expression of a matrix denotes symmetric blocks. For matrices $W$ and $Z$, \textit{diag}$(W,Z)$ corresponds to the block-diagonal matrix. Finally, $\reals ^+$ denotes the set of elements $\beta \in \reals$ such that $\beta \geq 0$.

\section{Problem formulation}
\label{sec:pb}

Consider the plant described by the following equations
\begin{align}
    \begin{cases}\label{eq:Plant}
      x(k+1)=A x(k)+B  u(k-d(k)) \\
      y(k)= C x(k)\\
      u(k) = \theta_u(k),~k\in[-d_M,-1]
    \end{cases} 
    \end{align}
\noindent where $x(k) \in \reals ^{n}$ is the plant state vector, $y(k) \in \reals^{p}$ is the delayed measured output, $u(k) \in \reals^{m}$ is the control input, and $x(0)$, $\theta_u(k)$ are initial conditions for the state and control input. The pair ($A$, $B$) with constant and known $A$, $B$ matrices of appropriate dimensions is controllable. The plant output delay is bounded and time-varying such as $1 \leq d_m \leq d(k) \leq d_M$, and can arbitrarily vary within such limits. Furthermore, integers $d_m$ and $d_M$ are known, whereas the value of $d(k)$ at each sampling time is unknown. 

To control \eqref{eq:Plant} we consider the following static output feedback control law
\begin{align}\label{eq:control}
      u(k) = K y(k),
    \end{align}
\noindent where $K \in \reals^{m \times p}$ is a stabilizing gain to be designed. The interconnection \eqref{eq:Plant}-\eqref{eq:control} generates the following state-delayed closed-loop system
\begin{align}
   \begin{cases}
              x(k+1)=A x(k) + B K C x(k-d(k))\\
          x(k) = \phi(k), ~k \in \left[-d_M,0 \right] 
          \end{cases}\label{clsystem}
\end{align}
where $\phi(k)$ is the initial condition at the interval $\left[-d_M,0 \right]$. Then the problem we intend to solve in this note can be summarized as follows.
\begin{prob}
Given the plant matrices $A$, $B$, $C$, and the time-varying delay limits $d_m$, $d_M$, develop convex conditions in the form of linear matrix inequalities (LMIs) for the design of a matrix $K$ such that the asymptotic stability of the closed-loop system given by the interconnection \eqref{eq:Plant}-\eqref{eq:control} (i.e., system \eqref{clsystem}) is ensured for any $d_m \leq d(k) \leq d_M$.
\label{prob1}
\end{prob}

\section{Theoretical preliminaries}
Some important theoretical preliminaries are shortly reviewed in this section.

\subsection{Stability of time-delayed systems}
In general, stability of time-delayed systems can be tackled by using either delay-independent or delay-dependent conditions. The latter case (in which bounds on the delay are explicitly considered) is preferred in this work. Let us consider discrete-time linear system \eqref{clsystem} with time-varying delay $d(k)$. Let us define the state of \eqref{clsystem} as $\xbar{x}(k) \triangleq x(k+\theta)$, $\theta \in \{-d_M,-d_M+1,\dots,0\}$. In the Lyapunov-Krasoviskii framework, system \eqref{clsystem} is asymptotically stable if, for all $k \in \integers^{+}= \{0,1,2,\dots\}$, there exists a functional 
\begin{equation}\label{eq:LKfunctional}
    V(k) \triangleq
V(k,\xbar{x}(k)):\integers^{+}\times \overbrace{\reals ^n \times \dotsc \times \reals ^n}^{d_M+1 \text{ times}} \rightarrow \reals ^+
\end{equation}
such that, for all $\xbar{x}(k) \neq 0$, $V(k) > 0$ and its forward difference is negative, i.e., $\Delta V(k) = V(k+1)-V(k) < 0$. 

\subsection{Dissipativity conditions}

A non-delayed nonlinear dynamical system such as
\begin{align}
    \begin{cases}\label{nonlinearsystem}
x(k+1)=f(x(k))+g(x(k))u(k),\\
y(k)=Cx(k),
    \end{cases}
\end{align}
with state $x(k) \in \reals^{n}$ is said to be dissipative if there exists a continuous nonnegative storage function $V(x(k)): \reals^n \rightarrow \reals^{+}$ and a so-called supply rate $w(k)=w(u(k),y(k))$ such that for all $u \in \reals^{m}$ and all $k \in \integers^{+}$ the relation 
\begin{equation*}
    V(x(k+1))-V(x(k)) \leq w(u(k),y(k)),
\end{equation*}
holds \citep{Byrnes_1994}. Some definitions of dissipativity can be found in \cite{brogl1}. In this work, we present the definition of strict QSR-dissipativity below.
\begin{defn}\label{defdissip}
A system is said to be strictly QSR-dissipative along all possible trajectories of \eqref{nonlinearsystem} starting at \(x(0)\), for all \(k \geq 0\), if there exists \(T(x(k))>0\) such that
\begin{equation}\label{dissipativityeq_geral}
 \Delta V(x(k))+T(x(k)) \leq w(u(k),y(k)),
\end{equation}
\noindent with supply rate
\begin{equation}
    w(u(k),y(k)) = y^{\top}Qy+2y^{\top}Su+u^{\top}Ru,
\end{equation}
where \(S \in \mathbb{R}^{p \times m}\), and matrices \(Q \in \mathbb{R}^{p \times p}\) and \(R \in \mathbb{R}^{m \times m}\) are symmetric.
\end{defn}

Although being not the same in essence, when doing dissipativity-based control, the storage function is often regarded as the Lyapunov function so that stability of equilibrium points can be simultaneously investigated along with the dissipativity properties. In this work, we are interested in studying stabilization of system \eqref{eq:Plant} with control law \eqref{eq:control}, which in turn generates state-delayed closed-loop system \eqref{clsystem}. In order to properly establish delay-dependent stability of \eqref{clsystem}, we will consider a modified version of Definition \ref{defdissip}, which falls within the case of non-strict dissipativity. 

\begin{defn}\label{def:dissipatividelayed}
Consider a storage function given by a functional \eqref{eq:LKfunctional} of an augmented state $\xbar{x}(k)$. If 
\begin{equation}\label{eq:dissipativitymain}
\Delta V(k) \leq w(u(k-d(k)),y(k-d(k)))
\end{equation}
\noindent holds with supply rate
\begin{equation}\label{eq:suplydelay}
    \begin{split}
        &w(u(k-d(k)),y(k-d(k))) = \\ &~~~~~~~~~~y(k-d(k))^{\top}Qy(k-d(k)) \\
     &~~~~~~~~~~~+2y^{\top}(k-d(k))Su(k-d(k))\\&~~~~~~~~~~~~~~~~~~~~~+u^{\top}(k-d(k))Ru(k-d(k))
    \end{split},
\end{equation} 
then system \eqref{eq:Plant} is dissipative with respect to the delayed input-output pair $u(k-d(k)),y(k-d(k))$. 
\end{defn}

Naturally, application of Definition \ref{def:dissipatividelayed} is useful to determine about the dissipativity properties of the open-loop plant \eqref{eq:Plant}. Moreover, if at the same time that \eqref{eq:dissipativitymain} is fulfilled we can ensure that $\Delta V(k)$ is negative definite with system \eqref{eq:Plant} subject to the control law \eqref{eq:control}, then the uniform asymptotic stability of \eqref{clsystem} can be guaranteed for any time-varying delay $d_m \leq d(k) \leq d_M$. Such requirement can be satisfied by means of a second condition developed using Finsler's Lemma. This is the key idea used in this work, as will become clearer in Section \ref{sec:main}. 


\subsection{Finsler's Lemma}
Prior to the presentation of the main result, the celebrated Finsler's Lemma is reviewed.

\begin{lem}\label{lemma:Finsler} \citep{Mauricio_2001}
Consider $\zeta \in \reals^{n_{\zeta}}$, ${\Phi}={\Phi}^{\T} \in \reals^{n_{\zeta} \times n_{\zeta}}$, and $\Gamma \in \reals^{m_{\zeta} \times n_{\zeta}}$. The following facts are equivalent:

\begin{enumerate}[(i)]
    \item  $\zeta^{\T} {\Phi} \zeta<0$, $\forall \zeta$ such that $\Gamma\zeta=0$, $\zeta \neq 0$.
    \item $\exists \Jf \in \reals^{n_{\zeta} \times m_{\zeta}}$ such that ${\Phi}+\Jf \Gamma+\Gamma^{\T}\Jf^{\T} \prec 0$.
    \item $\Gamma^{\perp^{\T}} {\Phi} \Gamma^{\perp} \prec 0$, where $\Gamma \Gamma^{\perp} = 0$. 
\end{enumerate}
\end{lem}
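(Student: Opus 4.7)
The plan is to establish the three equivalences via the cycle (ii)$\Rightarrow$(i), (i)$\Leftrightarrow$(iii), and then the core direction (iii)$\Rightarrow$(ii). The first two are straightforward. For (ii)$\Rightarrow$(i), any nonzero $\zeta$ satisfying $\Gamma\zeta=0$ makes the cross-terms $\zeta^{\T}(\Jf\Gamma+\Gamma^{\T}\Jf^{\T})\zeta$ vanish, so strict negative definiteness of $\Phi+\Jf\Gamma+\Gamma^{\T}\Jf^{\T}$ immediately yields $\zeta^{\T}\Phi\zeta<0$. For (i)$\Leftrightarrow$(iii), choosing $\Gamma^{\perp}$ with linearly independent columns spanning $\ker(\Gamma)$ lets one parameterize every admissible $\zeta$ uniquely as $\zeta=\Gamma^{\perp}\eta$, so the constrained quadratic form in (i) becomes the free quadratic form $\eta^{\T}(\Gamma^{\perp\T}\Phi\Gamma^{\perp})\eta$, and strict negativity of the former for $\zeta\neq 0$ is equivalent to that of the latter for $\eta\neq 0$.

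For the main direction (iii)$\Rightarrow$(ii), I would try the penalty ansatz $\Jf=-\tau\Gamma^{\T}$ with $\tau>0$ to be chosen large, so that the candidate matrix becomes $M(\tau)\triangleq \Phi-2\tau\Gamma^{\T}\Gamma$. Adopting an orthonormal basis of $\reals^{n_{\zeta}}$ adapted to the orthogonal decomposition $\ker(\Gamma)\oplus \mathrm{Im}(\Gamma^{\T})$, I would write $M(\tau)$ in $2\times 2$ block form. The $(1,1)$ block coincides (up to a congruence by an invertible matrix) with $\Gamma^{\perp\T}\Phi\Gamma^{\perp}\prec 0$, so it is negative definite by (iii); the $(1,2)$ block is independent of $\tau$; and the $(2,2)$ block takes the form $D-2\tau T$, where $T$ is the restriction of $\Gamma^{\T}\Gamma$ to $\mathrm{Im}(\Gamma^{\T})$.

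The main obstacle is verifying that $T$ is strictly positive definite on $\mathrm{Im}(\Gamma^{\T})$: any nonzero $v\in\mathrm{Im}(\Gamma^{\T})$ cannot lie in $\ker(\Gamma)$, since $\ker(\Gamma)\cap\mathrm{Im}(\Gamma^{\T})=\{0\}$ by the rank-nullity decomposition, so $v^{\T}\Gamma^{\T}\Gamma v=\|\Gamma v\|^{2}>0$. With $T\succ 0$ in hand, for $\tau$ large enough one has $D-2\tau T\prec 0$, and a Schur-complement argument combined with $(D-2\tau T)^{-1}\to 0$ as $\tau\to\infty$ yields $M(\tau)\prec 0$ for all sufficiently large $\tau$. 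The construction $\Jf=-\tau\Gamma^{\T}$ with such a $\tau$ then delivers (ii), closing the cycle. Note that this existence proof is non-constructive in the sense that it does not yield the smallest admissible $\tau$, but this is not needed for the equivalence.
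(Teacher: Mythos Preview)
The paper does not prove this lemma; it is stated as a classical result with a citation and then applied as a tool in the proof of Theorem~\ref{designtheorem}. There is therefore no ``paper's own proof'' to compare against.

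That said, your argument is correct and follows the standard route. The implications (ii)$\Rightarrow$(i) and (i)$\Leftrightarrow$(iii) are exactly as you describe. For (iii)$\Rightarrow$(ii), the penalty ansatz $\Jf=-\tau\Gamma^{\T}$ with $\tau>0$ large is the classical proof; in fact, many statements of Finsler's Lemma list ``$\exists\,\tau>0$ such that $\Phi-\tau\Gamma^{\T}\Gamma\prec 0$'' as a fourth equivalent condition, and your Schur-complement argument on the block decomposition relative to $\ker(\Gamma)\oplus\mathrm{Im}(\Gamma^{\T})$ is precisely how that equivalence is usually established. One small point worth making explicit: the lemma as written only imposes $\Gamma\Gamma^{\perp}=0$, which by itself does not force the columns of $\Gamma^{\perp}$ to span $\ker(\Gamma)$; your proof of (i)$\Leftrightarrow$(iii) correctly assumes the standard convention that $\Gamma^{\perp}$ is a full-column-rank basis matrix for $\ker(\Gamma)$, which is also how $\Gamma^{\perp}$ is used later in the paper.
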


\section{Main results}\label{sec:main}
In this section, we present convex conditions for the design of the stabilizing gain $K$. 

\subsection{Controller design}
Let us define $\hd=d_M-d_m$, and the function $\gamma(d)$ given by
\begin{equation}\label{eq:gammafunc}
    \begin{cases}
            \gamma(d)=1, \text{ if } d=1 \\
            \gamma(d)=(d+1)/(d-1), \text{ if } d>1.
    \end{cases}
\end{equation}
\noindent Then, the following theorem provides a solution to Problem \ref{prob1}.
\begin{thm} \label{designtheorem}
Given a scalar $\rho$, assume that there exist matrices ${P} $ in $\mathbb{S}_{3n}^{+}$, ${W}_1$, ${W}_2$, ${Z}_1$, ${Z}_2$, $R$ in $\mathbb{S}_{m}^{+}$, symmetric matrix $Q$ in $\reals^{p \times p}$, matrices $S$ in $\reals^{p \times m}$ and ${X}$ in $\reals^{2n\times2n}$ such that 
\begin{equation} \label{main_matineq}
\begin{split}
    {\Psi}_z \succ 0, ~~\Gamma^{ \perp^{\T}}\left({\xbar{\Phi}}(d)+\Upsilon \right) \Gamma^{ \perp} \prec 0,
\end{split}
\end{equation}
\begin{equation}\label{main_stab}
     \begin{bmatrix}
    Q + \rho \ones_{p \times m} S^\T + S \rho \ones_{m \times p} & \star  \\
    R^\T \rho \ones_{m \times p} & -R
    \end{bmatrix} \prec 0,
\end{equation}
\noindent hold for $d=d_m$ and $d=d_M$, with $\Gamma^{\perp} = \begin{bmatrix} A & 0_{n \times 6n} & B \\
& I_{7n+m}& \end{bmatrix}$,
\begin{equation*}
\begin{split}
    &\Upsilon = \Upsilon_1 + \Upsilon_2 + \Upsilon_2^{\T}, \\
    &{\Upsilon_1} = \text{diag}(0_{3n \times 3n},-C^{\T} Q C,0_{4n \times 4n},-R), \\
    & \Upsilon_2 = \begin{bmatrix}
        0_{n \times 3n} & I & 0_{n \times (4n+m)}
    \end{bmatrix}^{\T} \begin{bmatrix}
        0_{n \times 8n} & -C^{\T} S
    \end{bmatrix},
\end{split}
\end{equation*}
\noindent and
\begin{equation*}
\begin{split}
  \xbar{\Phi}(d)&=diag\{\Phi(d),0_{m \times m}\}, \\
    {\Phi}(d) &= F_{2}^{\T}{P}F_{2}  -F_{1}^{\T}{P}F_{1} \hspace{-0.05cm} + \hspace{-0.05cm} \text{He}\{F^{\T}(d)P(F_2-F_1)\}\hspace{-0.05cm}+\hspace{-0.05cm}{W} \\
   +&F_3^{\T}(d_m^2 {{Z}}_1+\hd^2 {{Z}}_2) F_3-F_{s}^{\T} \mathscr{Z}_1(d_m)F_{s}-F_{\Psi}^{\T}{\Psi}_zF_{\Psi},
\end{split}
\end{equation*}
\noindent where 
\begin{equation*}
    {W} = \text{diag}(0,{W}_1,{W}_2-{W}_1,0,-{W}_2,0,0,0),
\end{equation*}
\begin{equation*}
    \mathscr{Z}_1(d_m) = \text{diag}\left({Z}_1,3\gamma(d_m){Z}_1\right), \mathscr{Z}_2 = \text{diag}\left({Z}_2,3{Z}_2\right),
\end{equation*}
\begin{equation*}
    F_{\Psi} = \begin{bmatrix}
   0_{2n\times2n} & & M \\ 0_{2n\times n} & M & 0_{2n\times n}
   \end{bmatrix}, M = \begin{bmatrix}  0 & I & -I & 0 & 0 & 0 \\
   0 & I & I & 0 & 0 & -2I \end{bmatrix},
\end{equation*}
\begin{equation*}
    F_{s} = \begin{bmatrix} M & 0_{2n\times2n} \end{bmatrix}, F_{3} = \begin{bmatrix} I & -I & 0 & 0 & 0 & 0 & 0 & 0 \end{bmatrix},
\end{equation*}
\begin{equation*}
    F_{1} = \begin{bmatrix} 0 & I & 0 & 0 & 0 & 0 & 0 & 0 \\
     0 & -I & 0 & 0 & 0 & (d_m +1)I & 0 & 0 \\
     0 & 0 & -I & -I & 0 & 0& (1-d_m)I & (d_M+1)I \end{bmatrix},
\end{equation*}
\begin{equation*}
    F_{2} = \begin{bmatrix} I & 0 & 0 & 0 & 0 & 0 & 0 & 0 \\
     0 & 0 & -I & 0 & 0 & (d_m +1)I & 0 & 0 \\
     0 & 0 & 0 & -I & -I & 0& (1-d_m)I & (d_M+1)I \end{bmatrix},
\end{equation*}
\begin{equation*}
F(d) = \begin{bmatrix}
0_{2n \times 6n} & 0_{2n \times n} & 0_{2n \times n} \\
0_{n \times 6n} & d I_n & -d I_n
\end{bmatrix}, {\Psi}_z = \begin{bmatrix}
    ~\mathscr{Z}_2 & {X}~ \\
    ~\star & \mathscr{Z}_2~
    \end{bmatrix}.
\end{equation*}
\noindent Then, the SOF gain $K=-R^{-1}S^{\T}$ asymptotically stabilizes the closed-loop system \eqref{clsystem} for any time-varying delay $d_m \leq d(k) \leq d_M$. 
\end{thm}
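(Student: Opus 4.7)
The plan is to follow the dissipativity-plus-Finsler strategy outlined in Section \ref{sec:main}: I will construct a Lyapunov-Krasovskii functional that doubles as a storage function, use the first matrix condition in \eqref{main_matineq} to show that the open-loop plant is dissipative with respect to the delayed pair $(u(k-d(k)),y(k-d(k)))$ in the sense of Definition \ref{def:dissipatividelayed}, and use \eqref{main_stab} to show that, once the loop is closed with $K=-R^{-1}S^{\T}$, the induced supply rate is strictly negative. Combining the two bounds yields $\Delta V(k)<0$ along the closed-loop trajectories and asymptotic stability follows from the Lyapunov-Krasovskii theorem.

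First I would fix the LKF. The dimensions in the statement are suggestive: since $P\in\mathbb{S}_{3n}^{+}$, the ``quadratic'' term takes the form $\eta(k)^{\T}P\eta(k)$, where $\eta(k)$ stacks $x(k)$ together with two normalized accumulations of past states (the $dI$ block of $F(d)$ signals a term of the form $\sum_{i=k-d}^{k-1}x(i)$). The $W_1,W_2$ blocks enter as single summations over the intervals $[k-d_m,k-1]$ and $[k-d_M,k-d_m-1]$, and the $Z_1,Z_2$ blocks enter as the customary double sums $d_m\sum\sum(\Delta x)^{\T}Z_1(\Delta x)$ and $\hd\sum\sum(\Delta x)^{\T}Z_2(\Delta x)$. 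Expanding $\Delta V(k)$ and collecting terms against the augmented regressor $\zeta$ (of dimension $8n+m$) produces three kinds of contributions: (i) the $P$-term, whose coefficient reproduces exactly $F_{2}^{\T}PF_{2}-F_{1}^{\T}PF_{1}+\text{He}\{F^{\T}(d)P(F_{2}-F_{1})\}$; (ii) the telescoping of the $W$-sums, collapsing to the block-diagonal $W$; (iii) the $Z$-parts, which yield the positive quadratic $F_{3}^{\T}(d_{m}^{2}Z_{1}+\hd^{2}Z_{2})F_{3}$ and two residual summations that must be bounded. The $Z_1$-residual is lower-bounded by a Wirtinger-type summation inequality (the source of the factor $3\gamma(d_m)$ and of its separate definition at $d_m=1$), while the $Z_2$-residual, whose length depends on $d(k)$, is lower-bounded by the reciprocally convex combination lemma: the slack $X$ and the positivity $\Psi_z\succ 0$ are exactly the hypotheses of that lemma, and the resulting bound contributes the $-F_{s}^{\T}\mathscr{Z}_{1}(d_m)F_{s}$ and $-F_{\Psi}^{\T}\Psi_{z}F_{\Psi}$ terms of $\Phi(d)$.

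Next I would introduce the supply rate. Subtracting $w(u(k-d(k)),y(k-d(k)))$ from $\Delta V(k)$ produces exactly the $-C^{\T}QC$, $-C^{\T}S$ and $-R$ contributions that assemble into $\Upsilon_1+\Upsilon_2+\Upsilon_2^{\T}$. At this point the closed-loop dynamics have not yet been used: the quadratic form $\zeta^{\T}(\xbar{\Phi}(d)+\Upsilon)\zeta$ is arbitrary except for the algebraic constraint $x(k+1)=Ax(k)+Bu(k-d(k))$, which is encoded by $\Gamma\zeta=0$ for a $\Gamma$ whose null-space basis is precisely the $\Gamma^{\perp}$ of the statement. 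Applying item (iii) of Lemma \ref{lemma:Finsler} converts the constrained negativity into the unconstrained inequality $\Gamma^{\perp^{\T}}(\xbar{\Phi}(d)+\Upsilon)\Gamma^{\perp}\prec 0$. Because $\Phi(d)$ is affine in $d$ after the reciprocally convex step, verifying it at the two vertices $d=d_m$ and $d=d_M$ suffices for all admissible $d$. This establishes the delayed dissipativity inequality \eqref{eq:dissipativitymain}.

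Finally I would use \eqref{main_stab} to drive the supply rate strictly negative under $u(k-d(k))=-R^{-1}S^{\T}y(k-d(k))$. Direct substitution collapses $w$ to $y^{\T}(Q-SR^{-1}S^{\T})y$ evaluated at $k-d(k)$; its negativity is a BMI in $(S,R)$, and the purpose of the Schur-structured LMI \eqref{main_stab}, with the scalar $\rho$ multiplying the all-ones block $\ones_{p\times m}$, is to provide a \emph{linear} sufficient condition on $(Q,S,R)$, in the spirit of the SOF linearization of \cite{Madeira_2021}, that certifies $w<0$. Dissipativity combined with $w<0$ gives $\Delta V(k)<0$ for every $\xbar{x}(k)\neq 0$, and the Lyapunov-Krasovskii theorem then delivers asymptotic stability of \eqref{clsystem} uniformly over all $d_m\le d(k)\le d_M$. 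The main obstacle I anticipate is the bookkeeping in step two: choosing $\zeta$ so that the expansion of $\Delta V(k)$ lines up cleanly with $F_1,F_2,F_3,F(d),F_s,F_{\Psi}$, and in particular arranging the $Z_2$ summation so that the reciprocally convex split leaves at most an affine dependence on $d$ that can be tested at the two vertices; verifying that the $\rho$-parameterization is tight enough to avoid unnecessary conservatism in \eqref{main_stab} is a secondary difficulty.
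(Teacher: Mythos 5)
Your proposal is correct and follows essentially the same route as the paper: the Seuret-type LKF bounded via Wirtinger summation and the reciprocally convex lemma, subtraction of the delayed supply rate to form $\xbar{\Phi}(d)+\Upsilon$, Finsler's Lemma on the constraint $\Gamma\zeta=0$, vertex testing in $d$, and finally negativity of the supply rate under $K=-R^{-1}S^{\T}$. The only (minor) presentational difference is in the last step: the paper does not invoke a Schur-type linearization of $Q-SR^{-1}S^{\T}\prec 0$ but instead applies Finsler's Lemma a second time to the constraint $S^{\T}y+Ru=0$ with the structured multiplier $L_s=\begin{bmatrix}\rho \ones_{m\times p} & -I\end{bmatrix}^{\T}$, which is exactly what produces the LMI \eqref{main_stab}; the two viewpoints certify the same fact.
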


\begin{proof}
In order to establish stability of \eqref{clsystem} for any time-varying delay $d_m \leq d(k) \leq d_M$,  we need to find a Lyapunov-Krasoviskii functional $V(k)$ that satisfies $V(k+1)-V(k)<0$. Consider then the LKF borrowed from \cite{Seuret_2015}, given by
\begin{equation}\label{LKFFunc}
    V(k) = V_{1}(k)+V_{2}(k)+V_{3}(k),
\end{equation}
\noindent where
\begin{equation*}
\begin{split}
&V_{1}(k) = w^{\T}(k)P w(k), \\
&V_{2}(k) = \sum_{l=k-d_m}^{k-1} x^{\T}(l)W_1 x(l)+\sum_{l=k-d_M}^{k-d_m-1} x^{\T}(l)W_2x(l), \\
&\!\begin{aligned}
V_{3}(k) = & d_m  \sum_{l=-d_m+1}^{0} \sum_{i=k+l}^{k} \eta^{\T}(i)Z_1\eta(i) \\ &+  \hd \sum_{l=-d_M+1}^{-d_m} \sum_{i=k+l}^{k} \eta^{\T}(i)Z_2\eta(i),
\end{aligned}
\end{split}
\end{equation*}
\noindent with $w(k) \hspace{-0.15cm}= \hspace{-0.15cm}\begin{bmatrix} x^{\T}(k) & \sum_{l=k-d_m}^{k-1} x^{\T}(l) & \sum_{l=k-d_M}^{k-d_m-1} x^{\T}(l) \end{bmatrix}^{\T}$ and $\eta(i)=x(i)-x(i-1)$. Matrices ${P} $ in $\mathbb{S}_{3n}^{+}$, ${W}_1$, ${W}_2$, ${Z}_1$, and ${Z}_2$ in $\mathbb{S}_{n}^{+}$ guarantee that the functional is positive definite. Consider the augmented vector
\begin{equation*}
    \begin{aligned}
\xi(k) &=
\left[\begin{matrix}
  x(k+1)^{\T} & x(k)^{\T} & x(k-d_m)^{\T}
\end{matrix}\right.\\
&\quad\quad
\left.\begin{matrix}
  {}x(k-d(k))^{\T} & x(k-d_M)^{\T} & v^{\T}
\end{matrix}\right]^{\T}
\end{aligned}
\end{equation*}
\noindent with $ v = \begin{bmatrix}
v_1^{\T} & v_2^{\T} & v_3^{\T}  
\end{bmatrix}^{\T}$ given by 
\begin{align*}
    v_1 &= \frac{1}{d_m+1}\sum_{l=k-d_m}^{k}x(l), \\
    v_2 &= \frac{1}{d(k)-d_m+1}\sum_{l=k-d(k)}^{k-d_m}x(l), \\
    v_3 &= \frac{1}{d_M-d(k)+1}\sum_{l=k-d_M}^{k-d(k)}x(l).
\end{align*}
\noindent From a procedure similar to the one in \cite{Seuret_2015}, the bound $\Delta V(k) \leq \xi^{\T}(k){\Phi}(d(k))\xi(k)$ (subject to the existence of matrix $X$ in $\reals^{2n\times2n}$ such that ${\Psi}_z \succ 0$) is obtained, where $\Phi(d(k))$ was first given in Theorem \ref{designtheorem}. Then, observe that satisfaction of  
\begin{equation}\label{eq:boundedLKF}
\begin{split}
     \xi^{\T}(k){\Phi}(d(k))\xi(k) \leq  w(u(k-d(k)),y(k-d(k))),
\end{split}
\end{equation}
\noindent subject to ${\Psi}_z \succ 0$ implies satisfaction of \eqref{eq:dissipativitymain}
with the functional $V(k)$ in \eqref{LKFFunc} and the supply rate \eqref{eq:suplydelay}. By using $y(k-d(k))=C x(k-d(k))$ and considering the extended vector $\zeta(k) = \begin{bmatrix}
    \xi^{\T}(k) & u^{\top}(k-d(k))
\end{bmatrix}^{\T}$, condition \eqref{eq:boundedLKF} can be equivalently rewritten in the form
\begin{equation}\label{eq:important}
    \zeta^{\T}(k)\left(\xbar{\Phi}(d(k))+\Upsilon\right)\zeta(k) \prec 0,
\end{equation}
where $\xbar{\Phi}(d(k))$ and $\Upsilon$ have been defined in Theorem \ref{designtheorem}. Next, we apply Lemma \ref{lemma:Finsler} with $m_{\zeta}=n$ and $n_{\zeta}=8n+m$. By noting that $\Gamma \zeta(k) = 0$, with $\Gamma = \begin{bmatrix} -I & A & 0_{n \times 6n} & B \end{bmatrix}$, relation \eqref{eq:important} is verified for all $\zeta \neq 0$ if  $\Gamma^{ \perp^{\T}}\left({\xbar{\Phi}}(d(k))+\Upsilon \right) \Gamma^{ \perp} \prec 0$, where the matrix $\Gamma^{\perp}$ given in Theorem \ref{designtheorem} is an orthogonal complement of $\Gamma$. Since ${\xbar{\Phi}}(d(k))$ is affine with respect to $d(k)$, the last inequality is negative definite if and only if it is negative definite for both $d(k)=d_m$ and $d(k)=d_M$, which leads to condition \eqref{main_matineq} in the Theorem. 

Furthermore, if at the same time we guarantee that the right-hand side of inequality \eqref{eq:boundedLKF} is negative definite with the proposed control law \eqref{eq:control}, then a sufficient condition to fulfill the asymptotic stability requirement $\Delta V(k)<0$ is obtained. By considering the extended vector $\lambda = \begin{bmatrix}
y^{\T}(k-d(k)) & u^{\T}(k-d(k)) 
\end{bmatrix}^{\T}$, such negativity test can be written as $\lambda^{\T} M_d \lambda < 0$, with 
\begin{equation*}
    M_d =  \begin{bmatrix}
Q & \star \\
S^{\T} & R 
\end{bmatrix}.
\end{equation*}
Consider the control law \eqref{eq:control} with gain $K=-R^{-1}S^{\T}$. By noting that $C_s  \lambda = 0$, with $C_s = \begin{bmatrix} S^{\T} & R \end{bmatrix}$, Lemma \ref{lemma:Finsler} can once again be applied. If there exists matrix $L_s \in \reals^{(p+m)\times m}$ such that $M_d+L_sC_s+C^{\T}_s L_s^{\T} \prec 0$, then $\lambda^{\T} M_d \lambda < 0$ is guaranteed for all $\lambda \neq 0$. Finally, sufficient condition \eqref{main_stab} is obtained by making $L_s= \begin{bmatrix}
\rho \ones_{m \times p} & -I_{m \times m}
\end{bmatrix}^{\T}$, with some auxiliary scalar $\rho$, thus completing the proof. 
\end{proof}

For better readability of the conditions, the developed design theorem was presented by using the LKF strategy from \cite{Seuret_2015}, which applies Wirtinger's summation inequalities and the reciprocally convex lemma for the LKF manipulation\footnote{The manipulation is omitted due to keeping the proof simple and focused on the main ideas. Details on the LKF manipulation can be found in \cite{Seuret_2015}.}. Nonetheless, in general, any other LKF (which also serves as storage function) and strategy for its manipulation (such as the more advanced Bessel-Legendre inequalities in \cite{Seuret_2018,LIU2017}) could have been chosen.

For a given gain $K$, stability analysis can also be carried out with the presented strategy by fixing $R=I$ and $S=-K^{\T}$ in the conditions. Furthermore, in this case, no special form to matrix $L_s$ in the proof of Theorem \ref{designtheorem} needs to be fixed to obtain linear conditions in the decision variables. The following Corollary can be stated. 

\begin{cor}\label{cor:stabanalysis}
For a given gain $K$, fixed $R=I$ and $S=-K^{\T}$, assume that there exist matrices ${P} $ in $\mathbb{S}_{3n}^{+}$, ${W}_1$, ${W}_2$, ${Z}_1$, ${Z}_2$, $N$ in $\mathbb{S}_{n}^{+}$, symmetric matrix $Q$ in $\reals^{p \times p}$, matrices ${X}$ in $\reals^{2n\times2n}$ and $L_s$ in $\reals^{(p+m)\times m}$ such that \eqref{main_matineq} and $M_d+L_s C_s+C^{\T}_s L_s^{\T} \prec 0$ are satisfied for $d=d_m$ and $d=d_M$. Then, the closed-loop system \eqref{clsystem} is asymptotically stable for any time-varying delay $d_m \leq d(k) \leq d_M$. 
\end{cor}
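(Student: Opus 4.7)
The plan is to reuse the proof of Theorem \ref{designtheorem} almost verbatim, exploiting the fact that the gain $K$ is now prescribed rather than being a decision variable. The choice $R = I$, $S = -K^{\T}$ is consistent with the identity $K = -R^{-1}S^{\T}$ used in Theorem \ref{designtheorem}, so the closed-loop object being certified is the same system \eqref{clsystem}, and the only issue is to re-interpret each of the two LMIs in the new analysis setting.

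First I would replay the LKF construction with the same functional $V(k) = V_1(k) + V_2(k) + V_3(k)$ and the same augmented vector $\xi(k)$. The Seuret/reciprocally-convex manipulation of \cite{Seuret_2015}, conditioned on $\Psi_z \succ 0$, still yields the bound $\Delta V(k) \leq \xi^{\T}(k)\Phi(d(k))\xi(k)$. Substituting $y(k-d(k)) = Cx(k-d(k))$ and forming $\zeta(k) = [\xi^{\T}(k)\;\; u^{\T}(k-d(k))]^{\T}$ puts the delayed-dissipativity inequality into the quadratic form $\zeta^{\T}(\xbar{\Phi}(d) + \Upsilon)\zeta < 0$. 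Since the plant equation imposes $\Gamma\zeta(k) = 0$ with $\Gamma = [-I\;\; A\;\; 0_{n\times 6n}\;\; B]$, Finsler's Lemma (equivalence (i)$\Leftrightarrow$(iii)) together with the affine dependence on $d(k)$ reduces this to the same first LMI in \eqref{main_matineq}, evaluated at $d = d_m$ and $d = d_M$.

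The second condition needs the supply rate $\lambda^{\T} M_d \lambda$ to be negative on closed-loop trajectories, i.e.\ whenever $u = Ky = -R^{-1}S^{\T}y$, which is precisely the constraint $C_s \lambda = S^{\T}y + Ru = 0$. Finsler's Lemma then says that existence of a slack $L_s$ with $M_d + L_s C_s + C_s^{\T}L_s^{\T} \prec 0$ is sufficient. The important difference from Theorem \ref{designtheorem} is that $S$ and $R$ are now fixed data, so the products $L_s C_s$ and $C_s^{\T} L_s^{\T}$ are already linear in the unknowns; no particular structure, and in particular no scalar $\rho$, has to be imposed on $L_s$ to avoid bilinearity. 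This is exactly why the analysis inequality is less conservative than its synthesis counterpart \eqref{main_stab}.

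Combining the two conditions along the trajectories of \eqref{clsystem} gives $\Delta V(k) < 0$ for every nonzero augmented state, and the standard Lyapunov-Krasovskii argument concludes asymptotic stability for any $d_m \leq d(k) \leq d_M$. I expect the only real subtlety to be verifying that the two applications of Finsler's Lemma and the Seuret-type bound on $\Delta V(k)$ all carry over unchanged once $K$, $R$, $S$ trade places between free and fixed; everything else is a direct re-use of the proof of Theorem \ref{designtheorem}.
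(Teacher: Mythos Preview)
Your proposal is correct and follows essentially the same route the paper itself takes: the paragraph preceding the Corollary explicitly says that stability analysis is obtained by fixing $R=I$, $S=-K^{\T}$ in the conditions of Theorem~\ref{designtheorem} and that, in this case, no special structure on $L_s$ is required to keep the conditions linear. Your argument simply spells this out in detail, and all the key ingredients (reuse of the LKF bound, the two applications of Finsler's Lemma, the affine dependence on $d(k)$, and the reason $L_s$ may remain free) match the paper's reasoning.
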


In the time-delay literature, very conservative operations are usually applied to transform stability analysis conditions into convex design ones. Notice that this is not a problem with the employed strategy since almost no conservatism is introduced when going from analysis (Corollary \ref{cor:stabanalysis}) to design (Theorem \ref{designtheorem}). In fact, the main condition \eqref{main_matineq} is the same in both cases, while the special form given to matrix $L_s$ in the proof of Theorem \ref{designtheorem} in order to obtain \eqref{main_stab} introduces very little conservatism. To see this, note that there always exist values of $\rho$, $Q$, and $S$ making $Q + \rho \ones_{p \times m} S^\T + S \rho \ones_{m \times p}$ negative definite. Then, by Schur complement, there always exist values of $R$ making \eqref{main_stab} negative definite. It should be noted, however, that such statements do not mean that there always exist values of these variables \textit{simultaneously} satisfying both conditions \eqref{main_matineq} and \eqref{main_stab} of the discussed theorem.


\section{Numerical example}\label{sec:simu}

Consider the network control system example from \cite{HU2007}, which was also recently studied in \cite{Lima2021}
\begin{equation*}
    \dot{x} = \begin{bmatrix}
        -0.80 & -0.01 \\
        \phantom{-}1.00 & \phantom{-}0.10
    \end{bmatrix} x + \begin{bmatrix}
        0.4 \\ 0.1
    \end{bmatrix} u.
\end{equation*}
By considering a sampling time of 0.5 seconds and a network induced delay $d(k)$, model \eqref{eq:Plant} is obtained with
\begin{equation*}
    A = \begin{bmatrix}
       0.6693 & -0.0042 \\
       0.04231 & \phantom{-}1.0501
    \end{bmatrix}, ~~~
    B = \begin{bmatrix}
        0.1647 \\ 0.0960
    \end{bmatrix}.
\end{equation*}
Let us initially consider the stability analysis problem. In \cite{HU2007}, a static state feedback (SSF) with $K = - \begin{bmatrix}
1.2625 & 1.2679
\end{bmatrix}$ is found, which guarantees stability for a maximum network delay $d_M=2$. By running Corollary \ref{cor:stabanalysis} with this same control, stability is ensured for $1 \leq d(k)\leq 3$. 

Now let us discuss the design problem. First, for the SOF design, consider an output matrix $C = \begin{bmatrix}
    0 & 1
\end{bmatrix}$. By running design Theorem \ref{designtheorem} with $\rho=-0.15$, we find SOF gain $K = -0.1498$ that guarantees stability for $1 \leq d(k) \leq 19$, a much larger bound than the one from \cite{HU2007}. Interestingly, by running the stability analysis Corollary \ref{cor:stabanalysis} with this same control we found that stability holds for the same bounds $1 \leq d(k) \leq 19$. This strongly suggest that, as highlighted earlier in the paper, in the proposed strategy there is small conservatism in the manipulations to achieve convex design conditions. Finally, for the SSF case, by running design Theorem \ref{designtheorem} again with $\rho=-0.16$, the stabilizing state feedback gain $K = \begin{bmatrix}
    -0.2260 &  -0.1656
\end{bmatrix}$ that guarantees asymptotic stability for $1 \leq d(k) \leq 21$ is found. Once again, analysis Corollary \ref{cor:stabanalysis} suggests small conservatism of the design conditions since for this same gain stability is guaranteed for a only slightly higher delay bound $d_M=22$. 

It is worth to comment that with the much more complicated strategy from \cite{Lima2021}, stability was ensured for a maximum delay of $d_M=7$, thus evidencing the potential of the proposed stabilizing strategy in this note.  

\section{Conclusion}\label{sec:conclu}
We presented new conditions for the SOF stabilization of linear discrete-time systems with input time-varying delays. The conditions were developed with the help of dissipativity definitions, along with the application of Lyapunov-Krasoviskii functionals as storage functions and Finsler's lemma. Ongoing work on continuous-time systems with both state and input delays is in the developments. Future work envisages extensions to the control of delayed linear parameter-varying (LPV) and nonlinear rational systems. Other Lyapunov-Krasoviskii functionals as storage functions along with more advanced summation inequalities for their manipulation can also be explored.  


\bibliographystyle{apacite}        
\bibliography{autosam}          


\end{document}